\documentclass[11pt,reqno]{amsart}
\usepackage{mathrsfs}
\usepackage{amsfonts} 
\textwidth=13.5cm 
\baselineskip=17pt 
\usepackage{graphicx,latexsym,bm,amsmath,amssymb,verbatim,multicol,lscape}
\vfuzz2pt 
\hfuzz2pt 
\newtheorem{thm}{Theorem} [section]

\newtheorem{lem}[thm]{Lemma}

\theoremstyle{definition}

\theoremstyle{remark}

\numberwithin{equation}{section}

\begin{document}
\title[symmetric functions of reciprocal arithmetic progressions]
{The elementary symmetric functions of reciprocal arithmetic progressions}
\author{Chunlin Wang and Shaofang Hong$^*$}
\address{Mathematical College, Sichuan University, Chengdu 610064, P.R. China}
\email{wdychl@126.com (C. Wang); sfhong@scu.edu.cn, hongsf02@yahoo.com,
s-f.hong@tom.com (S. Hong)}
\thanks{$^*$Hong is the corresponding author and was supported partially
by National Science Foundation of China Grant \#11371260
and by the Ph.D. Programs Foundation of Ministry of Education
of China Grant \#20100181110073.}
\date{}%
\begin{abstract}
Let $a$ and $b$ be positive integers.
In 1946, Erd\H{o}s and Niven proved that there are only finitely
many positive integers $n$ for which one or more of the elementary
symmetric functions of $1/b, 1/(a+b),..., 1/(an-a+b)$ are integers.
In this paper, we show that for any integer $k$ with $1\le k\le n$,
the $k$-th elementary symmetric function of $1/b, 1/(a+b),..., 1/(an-a+b)$
is not an integer except that either $b=n=k=1$ and $a\ge 1$,
or $a=b=1, n=3$ and $k=2$. This strengthens the Erd\H{o}s-Niven
theorem and answers an open problem raised by Chen and Tang in 2012.
\end{abstract}

\maketitle

\section{\bf Introduction}

A well-known result in number theory states that for any integer
$n>1$, the harmonic sum $\sum_{i=1}^n\frac{1}{n}$ is not an integer.
Let $a$ and $b$ be positive integers. In 1946, Erd\H{o}s and Niven
\cite{[EN]} proved that there are only finitely
many positive integers $n$ for which one or more of the elementary
symmetric functions of $1/b, 1/(a+b),..., 1/(an-a+b)$ are integers.
Chen and Tang \cite{[CT]} proved that none of
the elementary symmetric functions of $1,1/2, ..., 1/n$ is an
integer if $n\ge 4$. Wang and Hong \cite{[WH]} proved that
none of the elementary symmetric functions of $1,1/3, ...,
1/(2n-1)$ is an integer if $n\ge 2$.

In this paper, we address the problem of determining all the finite
arithmetic progressions $\{b+ai\}_{i=0}^{n-1}$ such that one or more
elementary symmetric functions of $1/b, 1/(a+b), ..., 1/(an-a+b)$ are
integers (see Problem 1 of \cite{[CT]}). For any integer $k$ with
$1\le k\le n$, let $S_{a,b}(n,k)$ denote the $k$-th elementary
symmetric function of $1/b, 1/(a+b), ..., 1/(an-a+b)$. That is,
$$
S_{a,b}(n,k):=\sum_{0\le i_1<...<i_k\le
n-1}\prod_{j=1}^k\frac{1}{ai_j+b}.
$$
In the present paper, we introduce a new method to investigate the above question.
In particular, by improving greatly the arguments in \cite{[CT]}, \cite{[EN]} and
\cite{[WH]}, and providing a detailed analysis to $S_{a,b}(n,k)$,
we show the following result.
\begin{thm}
Let $a, b, n$ and $k$ be positive integers with $1\le k\le n$.
Then $S_{a,b}(n,k)$ is not an integer except that either
$b=n=k=1$, or $a=b=1, n=3$ and $k=2$, in
which case $S_{a,b}(n,k)$ is an integer.
\end{thm}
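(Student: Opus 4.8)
The plan is to detect non-integrality $p$-adically: it suffices to exhibit, for each admissible quadruple $(a,b,n,k)$ outside the two stated exceptions, a prime $p$ with $v_p\bigl(S_{a,b}(n,k)\bigr)<0$, where $v_p$ denotes the $p$-adic valuation. Writing $t_i=ai+b$ for $0\le i\le n-1$ and letting $e_j$ be the $j$-th elementary symmetric polynomial, the reciprocal duality $e_k(1/t_0,\dots,1/t_{n-1})=e_{n-k}(t_0,\dots,t_{n-1})/\prod_i t_i$ gives
\[
S_{a,b}(n,k)=\frac{e_{n-k}(t_0,\dots,t_{n-1})}{\prod_{i=0}^{n-1}t_i},\qquad
v_p\bigl(S_{a,b}(n,k)\bigr)=v_p\bigl(e_{n-k}(t_0,\dots,t_{n-1})\bigr)-\sum_{i=0}^{n-1}v_p(t_i).
\]
First I would dispose of the extreme cases. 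For $k=n$ one has $S_{a,b}(n,n)=1/\prod_i t_i$, an integer only when $n=1$ and $b=1$. For $k=1$ the sum is $\sum_i 1/t_i$, and a prime dividing exactly one term---which exists by a Bertrand-type argument---forces $v_p(S_{a,b}(n,1))=-v_p(t_{i_0})<0$ through the formula above. These, together with a finite check for small $n$, isolate the genuinely hard range $2\le k\le n-1$.

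The main idea for the hard range is a valuation gap argument. For a prime $p$, order $v_p(t_0),\dots,v_p(t_{n-1})$ increasingly as $v_{(0)}\le\cdots\le v_{(n-1)}$. Each monomial of $e_{n-k}(t)$ is a product $\prod_{i\in S}t_i$ over a size-$(n-k)$ set $S$, of $p$-adic value $\sum_{i\in S}v_p(t_i)\ge v_{(0)}+\cdots+v_{(n-k-1)}$. If the size-$(n-k)$ subset achieving this minimum is \emph{unique}, no cancellation occurs and
\[
v_p\bigl(S_{a,b}(n,k)\bigr)=\Bigl(v_{(0)}+\cdots+v_{(n-k-1)}\Bigr)-\sum_{i=0}^{n-1}v_p(t_i)=-\bigl(v_{(n-k)}+\cdots+v_{(n-1)}\bigr)<0
\]
as soon as $p$ divides some term. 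The cleanest way to secure a unique minimizer is to find a prime $p$ dividing \emph{exactly} $k$ of the terms: then the $n-k$ terms coprime to $p$ form the one size-$(n-k)$ set of valuation $0$, so $v_p(e_{n-k}(t))=0$ and $v_p(S_{a,b}(n,k))\le -k<0$ at once.

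To realize such primes I would count, for $p\nmid a$, the number of $i\in\{0,\dots,n-1\}$ with $t_i\equiv 0\pmod p$; this is governed by the residue $-ba^{-1}\bmod p$ and equals $\lfloor n/p\rfloor$ or $\lceil n/p\rceil$, so seeking a prime dividing exactly $k$ terms amounts to finding a suitable prime near $n/k$ (or, for $k$ close to $n$, a large prime factor of one of the top terms). The main obstacle is \emph{uniformity}: for certain middle values of $k$ no prime divides exactly $k$ terms, the minimizing subset need no longer be unique, and potential cancellation among the tied minimal monomials reopens the problem. In that situation one must either engineer a unique minimizer using a prime \emph{power}---exploiting that the multiples of $p^2,p^3,\dots$ thin out along the progression in a controlled pattern that can separate the top $k$ valuations---or confront directly the non-vanishing modulo $p$ of the relevant elementary symmetric coefficient of an arithmetic progression, equivalently a prescribed coefficient of the rising factorial $c(c+1)\cdots(c+n-1)$ over $\mathbb{F}_p$. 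Executing this case analysis uniformly in $a,b,n$, and matching the analytic existence range against an explicit finite check that captures the exceptional pair $a=b=1,\,n=3,\,k=2$, is where the bulk of the new method must go.
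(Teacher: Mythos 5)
Your skeleton --- find a prime $p$ near $n/k$ and show $v_p\bigl(S_{a,b}(n,k)\bigr)=-k$ --- is exactly the paper's core mechanism (its Lemma 2.5, fed by an effective prime-gap theorem of Dusart), and your ``exactly $k$ multiples'' computation matches the paper's easy subcase. But the proposal has a genuine gap precisely where you yourself flag one, and the gap is not a side issue: a prime $p\in(\frac{n}{k+1},\frac{n}{k}]$ divides either $k$ or $k+1$ of the terms $ai+b$, and one cannot in general arrange the count to be exactly $k$. So the ``non-unique minimizer'' situation is the generic case, not a pathology for ``certain middle values of $k$.'' Moreover, the resolution is neither of the two routes you suggest (prime powers, or non-vanishing of symmetric functions of progressions over $\mathbb{F}_p$); it is a one-line identity. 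When $p$ divides $k+1$ terms, write them as $p(a_0+aj)$, $0\le j\le k$, with $a_0=(ar+b)/p<a+b/p$. The tied minimal monomials of $e_{n-k}$ sum to $\bigl(\prod_{p\nmid t_i}t_i\bigr)\cdot p\sum_{j=0}^{k}(a_0+aj)$, and $\sum_{j=0}^{k}(a_0+aj)=\tfrac{1}{2}(k+1)(ak+2a_0)$, which is prime to $p$ because $p>ak+2a+2b/p>\max(k+1,\,ak+2a_0)$. Hence no cancellation occurs, $v_p(e_{n-k})=1$, and $v_p(S)=1-(k+1)=-k$. Deferring this step as ``where the bulk of the new method must go'' leaves the argument proving nothing whenever the count is $k+1$.

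The second gap is the large-$k$ regime, which your prime-based mechanism cannot reach at all. The interval $(\frac{n}{k+1},\frac{n}{k}]$ has length $\frac{n}{k(k+1)}$, and effective prime-gap theorems only populate it when $k=O(\log^2 n)$; for $k$ comparable to $n$ no prime can divide exactly $k$ terms, and your fallback for $k$ close to $n$ (a large prime factor of a single term) fails because the minimizing subsets then number $\binom{n-1}{n-k}$ and their sum can vanish modulo $p$ --- the very cancellation you cannot control. The paper closes this regime by a completely different, analytic device absent from your proposal: for all $k\ge\frac{e}{a}\log\frac{an+b}{b}+\frac{e}{b}$ (and also for small $n$) it proves $0<S_{a,b}(n,k)<1$ via the multinomial bound $S_{a,b}(n,k)\le\frac{1}{k!}\bigl(\sum_i 1/t_i\bigr)^k$ and a Stirling-type estimate (its Lemma 2.2). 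Only after both tools are in place does the remaining finite range ($a\le 18$, $b$ bounded, $n\le 120000$) become checkable by explicit prime tables and computation, which is how the exceptional case $a=b=1$, $n=3$, $k=2$ is isolated. As written, your proposal establishes non-integrality only in the favorable event that some prime divides exactly $k$ of the terms, and it has no mechanism for either the $(k+1)$-count case or for $k$ larger than logarithmic in $n$.
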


Clearly, Theorem 1.1 strengthens the Erd\H{o}s-Niven theorem
and answers completely Problem 1 of \cite{[CT]}. The key tool of
the current paper is to use an effective result of Dusart \cite{[Du2]}
on the distribution of primes, see Lemma 2.3 below.

The paper is organized as follows. First, in
Section 2, we show several lemmas which are needed for the proof of
Theorem 1.1. Finally, in Section 3, we give the details of
the proof of Theorem 1.1.

As usual, we denote by $\lfloor x\rfloor$ and $\lceil x\rceil$
the biggest integer no more than $x$ and the smallest integer no
less than $x$, respectively. Let $v_p$ denote the $p$-adic
valuation on the field $\mathbb{Q}$ of rational numbers,
i.e., $v_p(a)=b$ if $p^{b}$ divides $a$ and $p^{b+1}$
does not divide $a$.

\section{\bf Preliminary lemmas}

In this section, we show some preliminary lemmas, which are needed
for the proof of Theorem 1.1. We begin with the following
well-known result.

\begin{lem} \cite{[EN]} \cite{[Na]}
Let $a,b,n$ and $k$ be positive integers with $n\ge 2$.
Then $S_{a,b}(n,1)$ is not an integer.
\end{lem}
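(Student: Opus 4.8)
The plan is to work $p$-adically and to exploit the non-Archimedean (ultrametric) property of the valuation $v_p$. Concretely, I will look for a single prime $p$ for which the maximum of $v_p(ai+b)$ over $0\le i\le n-1$ is attained at exactly one index $i_0$. If such a prime exists, then in $S_{a,b}(n,1)=\sum_{i=0}^{n-1}\frac{1}{ai+b}$ the term $1/(ai_0+b)$ has $p$-adic valuation $-v_p(ai_0+b)$, which is strictly smaller than the valuation $-v_p(ai+b)$ of every other term. Hence $v_p(S_{a,b}(n,1))=-v_p(ai_0+b)<0$, so $S_{a,b}(n,1)$ is not a $p$-adic integer and a fortiori not a rational integer. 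In this way the whole problem is reduced to producing one prime whose maximal valuation on the denominators is uniquely attained.

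The main device for uniqueness is a spacing observation. Fix a prime $p$ with $p\nmid a$ and suppose $v_p(ai+b)=v_p(aj+b)=t$ with $i\ne j$. Subtracting, $p^{t}\mid a(j-i)$, and since $p\nmid a$ this forces $p^{t}\mid (j-i)$, so $\abs{i-j}\ge p^{t}$. As all indices lie in $\set{0,\dots,n-1}$, two indices can share the value $t$ only when $p^{t}\le n-1$. Consequently, if I can find a prime $p\nmid a$ with $p\ge n$ dividing some $ai+b$, then no two denominators are divisible by $p$ at all, the maximal valuation (which is at least $1$) is attained uniquely, and the first paragraph finishes the proof. When $a$ is odd the prime $p=2$ already works with no size condition: two odd cofactors differ by an even number, which upgrades the spacing to $2^{t+1}\mid (j-i)$. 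Since $2\nmid a$ the denominators run through every residue class modulo $2^{\lfloor\log_2 n\rfloor}$, so some term is divisible by $2^{\lfloor\log_2 n\rfloor}$, giving $t\ge\lfloor\log_2 n\rfloor\ge 1$; and $2^{\lfloor\log_2 n\rfloor+1}>n$ shows the maximal power of $2$ cannot be shared.

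It remains to handle even $a$, and this is where the genuine difficulty sits: I must guarantee the existence of a prime $p\ge n$ with $p\nmid a$ dividing one of the $n$ terms $b,a+b,\dots,a(n-1)+b$. This is precisely a statement about a large prime factor inside a block of an arithmetic progression, and it is the crux of the lemma. For $a=1$ it follows from Bertrand's postulate (the largest prime in the relevant range is the unique term it divides), and in general from the Sylvester--Erd\H{o}s theorem on prime factors of products of consecutive terms of an arithmetic progression; the effective prime-counting estimate recorded later as Lemma 2.3 supplies far more than is needed here. I expect the only real care to be required in a few small configurations (small $n$, or $a$ even with $b$ sharing a factor with $a$), where the generic existence theorems do not directly apply and non-integrality must be confirmed either by hand or by locating a suitable odd prime power dividing a single denominator. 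Once the prime $p$ is secured, the valuation computation of the first paragraph completes the argument.
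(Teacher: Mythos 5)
Your ultrametric framework and the odd-$a$ case are correct and complete: for $2\nmid a$ the indices $0,\dots,2^m-1$ with $m=\lfloor\log_2 n\rfloor$ cover every residue class modulo $2^m$, so some term has $v_2(ai+b)\ge m\ge 1$, and your refined spacing bound $2^{t+1}\mid (j-i)$ forces the maximal $2$-adic valuation to be attained at a single index, whence $v_2(S_{a,b}(n,1))\le -1$. Bear in mind that the paper offers no proof of this lemma at all — it is quoted from Erd\H{o}s--Niven \cite{[EN]} and Nagell \cite{[Na]} — so your argument must stand on its own, and in the even-$a$ case it does not. There you need a prime $p\ge n$ with $p\nmid a$ dividing one of $b,a+b,\dots,a(n-1)+b$, and neither tool you invoke delivers it. Dusart's estimate (Lemma 2.3) produces a prime in a prescribed real interval, but for $a\ge 2$ such a prime divides a term of the progression only if one of its multiples in $[b,\,a(n-1)+b]$ also lies in the residue class $b\bmod a$, and a pure interval statement cannot guarantee that congruence condition; this obstruction is precisely why the paper's Lemmas 2.4--2.5 choose $p\le n/k$, so that $\{0,1,\dots,n-1\}$ contains a complete residue system modulo $p$ and a term divisible by $p$ exists automatically. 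The Sylvester--Erd\H{o}s theorem for arithmetic progressions, in turn, requires $\gcd(a,b)=1$ (a reduction you never perform, though it is easy: with $d=\gcd(a,b)$ one has $v_p(S_{a,b}(n,1))=v_p(S_{a/d,b/d}(n,1))-v_p(d)$) and carries a size hypothesis on the terms relative to $n$ (in Sylvester's form, $b\ge a+n$), so the region it leaves uncovered is an infinite family, not ``a few small configurations''; and you concede these cases are ``to be confirmed by hand'' without enumerating or checking any of them. A proof that defers an unidentified set of cases to an unperformed verification is not a proof: the even-$a$ case is exactly the content of Nagell's theorem, and your write-up leaves it where it started.

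If you want to close the gap along your own lines, note that uniqueness of the maximal valuation is more than you need. The computation inside the paper's Lemma 2.5, specialized to $k=1$, shows that any prime $p$ with $\frac{n}{2}<p\le n$ and $p>3a+2b/p$ works even when it divides \emph{two} terms $pa_0$ and $p(a_0+a)$: since $p>a+2a_0$, one gets $v_p\big(\frac{1}{a_0}+\frac{1}{a_0+a}\big)=v_p\big(\frac{2a_0+a}{a_0(a_0+a)}\big)=0$, hence $v_p(S_{a,b}(n,1))=-1$. Bertrand's postulate then settles every even $a$ once $n$ exceeds roughly $6a+O(\sqrt{b})$, and in the complementary range the logarithmic estimate $\sum_{i\ge 1}1/(ai+b)<\frac{1}{a}\log\frac{an+b}{b}$ (as in the paper's (2.2)) pins the sum strictly between consecutive integers once $\max(a,b)$ is large, leaving only finitely many triples $(a,b,n)$ for direct check. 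That repair stays in the spirit of your plan, but as written the crux you yourself identified remains unproved.
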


\begin{lem}
Let $a,b,n$ and $k$ be positive integers such that $2\le k\le n$. If either
$n\le\frac{b}{a}\big(e^{a(\sqrt{2b^2+1}-1)/b}-1\big)$ or
$k\ge\frac{e}{a}\log\frac{an+b}{b}+\frac{e}{b},$ then $0<S_{a,b}(n,k)<1$.
\end{lem}
\begin{proof}
Evidently, $S_{a,b}(n,k)>0$. It remains to show that $S_{a,b}(n,k)<1$.
If $k=n$, it is easy to see that $S_{a,b}(n,k)<1$. In the following, we
assume that $2\le k\le n-1$.

First, we let $n\le\frac{b}{a}e^{a(\sqrt{2b^2+1}-1)/b}-\frac{b}{a}$.
By the multinomial expansion theorem, we deduce that
\begin{align*}
S_{a,b}(n,k)=&\sum\limits_{0\le i_1<...<i_k\le n-1}\prod\limits_{j=1}^k\frac{1}{ai_j+b}\\
=&\sum\limits_{0=i_1< i_2<...<i_k\le n-1}\frac{1}{b}\prod\limits_{j=2}^{k}\frac{1}{ai_j+b}
+\sum\limits_{1\le i_1<...<i_k\le n-1}\prod\limits_{j=1}^k\frac{1}{ai_j+b}\\
\le&\frac{1}{b(k-1)!}\Big(\sum\limits_{i=1}^{n-1}\frac{1}{ai+b}\Big)^{k-1}
+\frac{1}{k!}\Big(\sum\limits_{i=1}^{n-1}\frac{1}{ai+b}\Big)^k. \tag{2.1}
\end{align*}
Since
$$
\sum_{i=1}^{n-1}\frac{1}{ai+b}<\sum_{i=1}^{n-1}\int_{i-1}^i\frac{1}{ax+b}dx
<\frac{1}{a}\log\frac{an+b}{b}, \eqno(2.2)
$$
by (2.1) one concludes that
\begin{align*}
S_{a,b}(n,k)<\frac{1}{b(k-1)!}\Big(\frac{1}{a}\log\frac{an+b}{b}\Big)^{k-1}
+\frac{1}{k!}\Big(\frac{1}{a}\log\frac{an+b}{b}\Big)^k. \tag{2.3}
\end{align*}

But $n\le\frac{b}{a}e^{a(\sqrt{2b^2+1}-1)/b}-\frac{b}{a}$ gives us that
$\frac{1}{a}\log\frac{an+b}{b}\le \frac{\sqrt{2b^2+1}-1}{b}$.
It then follows from (2.3) and $k\ge 2$ that
\begin{align*}
&S_{a,b}(n,k)<\frac{1}{b(k-1)!}\Big(\frac{\sqrt{2b^2+1}-1}{b}\Big)^{k-1}
+\frac{1}{k!}\Big(\frac{\sqrt{2b^2+1}-1}{b}\Big)^k \\
=&\frac{\sqrt{2b^2+1}-1}{b^2}\frac{\Big(\frac{\sqrt{2b^2+1}-1}{b}\Big)^{k-2}}{(k-1)!}
+\frac{(\sqrt{2b^2+1}-1)^2}{2b^2}\frac{2\Big(\frac{\sqrt{2b^2+1}-1}{b}\Big)^{k-2}}{k!}.
\tag{2.4}
\end{align*}
Obviously $0<\frac{\sqrt{2b^2+1}-1}{b}<\sqrt{2}$
for all positive integers $b$. Hence for all $k\ge 2$, we have
$$0<\frac{2\Big(\frac{\sqrt{2b^2+1}-1}{b}\Big)^{k-2}}{k!}
\le\frac{\Big(\frac{\sqrt{2b^2+1}-1}{b}\Big)^{k-2}}{(k-1)!}\le 1.\eqno(2.5)
$$
Then (2.4) together with (2.5) infers that
$$S_{a,b}(n,k)<\frac{\sqrt{2b^2+1}-1}{b^2}+\frac{(\sqrt{2b^2+1}-1)^2}{2b^2}=1$$
as desired. So Lemma 2.2 is true if
$n\le\frac{b}{a}e^{a(\sqrt{2b^2+1}-1)/b}-\frac{b}{a}$.

Consequently, let $k\ge\frac{e}{a}\log\frac{an+b}{b}+\frac{e}{b}$.
The multinomial expansion theorem together with (2.2) tells us that
$$S_{a,b}(n,k)\le\frac{1}{k!}\Big(\sum\limits_{i=0}^{n-1}\frac{1}{b+ai}\Big)^k
<\frac{1}{k!}\bigg(\frac{1}{b}+\frac{1}{a}\log\frac{an+b}{b}\bigg)^k. \eqno (2.6)$$
On the other hand, since $k\ge\frac{e}{a}\log\frac{an+b}{b}+\frac{e}{b}$, one has
\begin{align*}
\log k!=&\sum_{i=2}^{k}\log i>\int_{1}^{k}\log x {\rm d}x>k(\log k-1) \\
\ge& k\log\bigg(\frac{1}{b}+\frac{1}{a}\log\frac{an+b}{b}\bigg)
=\log\bigg(\frac{1}{b}+\frac{1}{a}\log\frac{an+b}{b}\bigg)^k,
\end{align*}
which implies that the right-hand side of (2.6) is strictly less than 1.
So (2.6) concludes the desired result $S_{a,b}(n,k)<1.$
Thus Lemma 2.2 is proved in this case.

The proof of Lemma 2.2 is complete.
\end{proof}

\begin{lem} \cite{[Du1]} \cite{[Du2]} For any real number $x\ge 3275$,
there is a prime number $p$ satisfying that $x<p\le x(1+\frac{1}{2\log^2x}).$
\end{lem}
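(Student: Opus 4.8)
The statement is an explicit, very thin form of Bertrand's postulate, and the natural route is through effective estimates for the Chebyshev function $\theta(t)=\sum_{p\le t}\log p$. First I would reduce the existence of a prime to a positivity statement. Setting $y=x\big(1+\frac{1}{2\log^2 x}\big)$, we have $\theta(y)-\theta(x)=\sum_{x<p\le y}\log p$, so it suffices to prove $\theta(y)>\theta(x)$ for every real $x\ge 3275$; a single prime in the interval $(x,y]$ then exists automatically.

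To produce this positivity I would feed in two-sided effective bounds of the form $|\theta(t)-t|\le t\,\eta(t)$, where $\eta(t)$ is an explicit decreasing function. Using the lower bound for $\theta(y)$ and the upper bound for $\theta(x)$ gives
\[
\theta(y)-\theta(x)\ge y\big(1-\eta(y)\big)-x\big(1+\eta(x)\big)
=\frac{x}{2\log^2 x}-\big(x\,\eta(x)+y\,\eta(y)\big),
\]
since $y-x=\frac{x}{2\log^2 x}$. The problem is thereby reduced to the elementary-looking inequality $x\,\eta(x)+y\,\eta(y)<\frac{x}{2\log^2 x}$, i.e.\ (as $y$ is close to $x$) to forcing $\eta(x)$ below roughly $\frac{1}{4\log^2 x}$ for all $x\ge 3275$.

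The genuine obstacle is exactly this last requirement on $\eta$. Because the target interval has relative width of order $1/\log^2 x$, the crude effective bounds with relative error of order $1/\log x$ are useless here; one needs $\eta(t)$ to sit at or below the $1/\log^2 t$ scale with a favorable constant, and to do so all the way down to the small threshold $x=3275$. This is precisely the hard analytic content supplied by Dusart. His estimates rest on the explicit formula expressing $\psi(x)-x$ as a sum over the nontrivial zeros of $\zeta$, combined with a numerical verification of the Riemann Hypothesis up to a large height (which yields near square-root error, hence an $\eta$ far smaller than $1/\log^2$ in a long initial range) and an explicit zero-free region controlling the remaining tail. Granting such an $\eta$, the verification would split into a large-$x$ regime, where $\eta(x)=o(1/\log^2 x)$ settles the inequality with room to spare, and a bounded regime near $x=3275$, handled by a finite machine-assisted check. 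Since the lemma is quoted verbatim from \cite{[Du1]} and \cite{[Du2]}, I would in the end defer the sharp constants in $\eta$ to those sources rather than reprove the underlying bounds on $\theta$.
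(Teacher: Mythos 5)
The paper gives no proof of this lemma at all---it is quoted verbatim from Dusart's thesis \cite{[Du1]} and his 1999 paper \cite{[Du2]}---and your proposal likewise defers the hard analytic content (the explicit bounds on $\theta$ with error at or below the $1/\log^2$ scale) to those same sources, so the two treatments coincide in substance. Your preliminary reduction, from the existence of a prime in $\big(x, x\big(1+\frac{1}{2\log^2 x}\big)\big]$ to the positivity of $\theta(y)-\theta(x)$ via two-sided effective estimates, is a correct account of how such a result is actually established, but it is scaffolding around the citation rather than a departure from the paper's approach.
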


\begin{lem}
Let $a$ and $b$ be positive integers. Let $n$ be an integer
satisfying that $n>120000$ if $a\le 18$ and $b\le\frac{3275a(\sqrt{2}e+1)}{e^a-1}+1$,
and $n>\frac{b}{a}\big(e^{a(\sqrt{2b^2+1}-1)/b}-1\big)$ otherwise.
Then for any integer $k$ with $1\le k<\frac{e}{a}\log\frac{an+b}{b}+\frac{e}{b}$,
there is a prime $p$ such that $\frac{n}{k+1}<p\le \frac{n}{k}$ and $p>ak+2a+6.$
\end{lem}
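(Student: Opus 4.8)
The plan is to produce the desired prime by a single application of Dusart's estimate (Lemma 2.3) to the interval $\bigl(\frac{n}{k+1},\frac{n}{k}\bigr]$. Fix an integer $k$ with $1\le k<\frac{e}{a}\log\frac{an+b}{b}+\frac{e}{b}$ and set $x=\frac{n}{k+1}$. As long as $x\ge 3275$, Lemma 2.3 furnishes a prime $p$ with
\[
\frac{n}{k+1}<p\le\frac{n}{k+1}\Bigl(1+\frac{1}{2\log^2\frac{n}{k+1}}\Bigr).
\]
The lower bound gives $p>\frac{n}{k+1}$, so $p>ak+2a+6$ holds automatically once $\frac{n}{k+1}\ge ak+2a+6$; and the upper bound does not exceed $\frac{n}{k}$ exactly when, after clearing denominators, $k\le 2\log^2\frac{n}{k+1}$. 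Thus for each admissible $k$ it suffices to establish
\[
\mathrm{(i)}\ \frac{n}{k+1}\ge 3275,\quad \mathrm{(ii)}\ \frac{n}{k+1}\ge ak+2a+6,\quad \mathrm{(iii)}\ k\le 2\log^2\frac{n}{k+1}.
\]

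The next step is a monotonicity reduction. All three requirements become most stringent at the largest admissible $k$: as $k$ grows the quantities $\frac{n}{k+1}$ decrease, while $ak+2a+6$ and $k$ increase and $2\log^2\frac{n}{k+1}$ decreases. It therefore suffices to verify (i)--(iii) for the extremal $k$, where I may use $k+1<\frac{e}{a}\log\frac{an+b}{b}+\frac{e}{b}+1=:D(n)$ together with $ak+2a+6<e\log\frac{an+b}{b}+\frac{ae}{b}+2a+6$. This turns (i)--(iii) into inequalities in $a,b,n$ alone, and a short estimate shows that (i), namely $\frac{n}{D(n)}\ge 3275$, is the governing one: it forces $2\log^2\frac{n}{D(n)}\ge 2\log^2 3275>131$, which dominates the relevant range of $k$ and hence yields (iii), and it likewise absorbs $ak+2a+6$ and hence yields (ii). The whole lemma thus comes down to proving $\frac{n}{D(n)}\ge 3275$ under the stated hypotheses on $n$.

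To finish I would check $\frac{n}{D(n)}\ge 3275$, and here the case split enters. A direct computation gives $D'(n)=\frac{e}{an+b}$, whence $n/D(n)$ is increasing once $D(n)>\frac{en}{an+b}$; this holds throughout the ranges below, so it is enough to test the bound at the threshold value of $n$. In the generic (``otherwise'') case $n>\frac{b}{a}\bigl(e^{a(\sqrt{2b^2+1}-1)/b}-1\bigr)=:N_0$, evaluating at $N_0$ collapses the logarithm, since $\frac{aN_0+b}{b}=e^{a(\sqrt{2b^2+1}-1)/b}$ gives $D(N_0)=\frac{e\sqrt{2b^2+1}}{b}+1$; one then checks $\frac{N_0}{D(N_0)}\ge 3275$ directly, and the condition $a\ge 19$ or $b>\frac{3275a(\sqrt 2 e+1)}{e^a-1}+1$ is the convenient sufficient criterion guaranteeing this. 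In the complementary regime $a\le 18$ and $b\le\frac{3275a(\sqrt 2 e+1)}{e^a-1}+1$, the value $N_0$ may be too small, so it is replaced by the explicit hypothesis $n>120000$; here the decisive instance is $a=b=1$, for which $D(120000)\approx 35.5$ and $120000/35.5>3380>3275$, the other small $(a,b)$ being less demanding.

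The main obstacle is precisely this final, computational verification of $\frac{n}{D(n)}\ge 3275$. Since $D(n)$ grows like $\frac{e}{a}\log n$, the margin in the inequality is slimmest when both $a$ and $b$ are small; it is this worst case ($a=b=1$) that pins down the constant $120000$ and, together with the behaviour of $N_0$, dictates the exact shape of the threshold $\frac{3275a(\sqrt 2 e+1)}{e^a-1}+1$ separating the two cases. Everything else---the appeal to Dusart and the monotonicity reductions---is routine by comparison.
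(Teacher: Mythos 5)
Your skeleton agrees with the paper's: apply Dusart's estimate to $x=\frac{n}{k+1}$ and reduce the lemma to the three conditions (i) $\frac{n}{k+1}\ge 3275$, (ii) $\frac{n}{k+1}>ak+2a+6$, (iii) $k\le 2\log^2\frac{n}{k+1}$, which are exactly the paper's (2.7), the inequality $\frac{n}{k+1}>ak+2a+6$, and (2.8). The genuine gap is your claim that (i) ``governs'' the other two: you assert that $\frac{n}{k+1}\ge 3275$ forces $2\log^2\frac{n}{k+1}\ge 2\log^2 3275>131$, ``which dominates the relevant range of $k$,'' and that it likewise ``absorbs'' $ak+2a+6$. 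This implication is false as a universal statement. The admissible range of $k$ is $k<\frac{e}{a}\log\frac{an+b}{b}+\frac{e}{b}$, which is unbounded as $n\to\infty$: for $a=b=1$ and $n\ge e^{100}$ (allowed, since only $n>120000$ is required), $k$ may be as large as roughly $e\log n\approx 272>131$; and for $a\ge 1100$, $b=1$, even $k=1$ gives $ak+2a+6\ge 3a+6>3275$. In those regimes (ii) and (iii) do hold, but only because the hypotheses force $n$, and hence $\frac{n}{k+1}$, to be enormous compared with $k$ and $a$; that quantitative relationship cannot be extracted from the single inequality $\frac{n}{k+1}\ge 3275$, and your proposal never establishes it.

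This is not a minor omission; it is where the paper spends most of its effort. The paper proves the three conditions by three separate monotonicity arguments with auxiliary functions: $f(x)=x-\frac{3275e}{a}\log\frac{ax+b}{b}-3275(\frac{e}{b}+1)$ for (2.7); $g(x)=x-2\log\big(\frac{ex}{a}+\frac{e\log a}{a}+\frac{e}{a}+\frac{e}{b}+1\big)-2$ applied at $x=\log n$ (using $\log n>9$) to get $2\big(\log n-\log(k+1)\big)^2>\frac{e}{a}\log an+\frac{e}{a}+\frac{e}{b}>k$, i.e.\ (2.8); and $h(x)=x-\big(\frac{e}{a}\log\frac{ax+b}{b}+\frac{e}{b}+1\big)\big(e\log\frac{ax+b}{b}+\frac{ae}{b}+2a+6\big)$ to get $n-(k+1)(ak+2a+6)>h(n)>0$, each with its own case analysis in $a$ and $b$. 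Your actual verification work (the evaluation of $D(N_0)=\frac{e\sqrt{2b^2+1}}{b}+1$ at the threshold $N_0$, and the check $120000/D(120000)>3275$ at $a=b=1$) reproduces, in essence, the paper's three cases for $f$ only; conditions (ii) and (iii) are left resting on the invalid ``absorption'' step, so the proof as proposed is incomplete precisely for large $n$ and large $a$, which the lemma must cover.
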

\begin{proof}
To show that there is a prime $p$ such that $\frac{n}{k+1}<p\le \frac{n}{k}$,
it suffices to show that the following two inequalities hold:
$$\frac{n}{k+1}\ge 3275 \eqno(2.7)$$
and
$$2\log^2\frac{n}{k+1}\ge k.\eqno(2.8)$$
Actually, if (2.7) and (2.8) are both true, then by Lemma 2.3,
there is a prime $p$ such that
$$
\frac{n}{k+1}<p\le\frac{n}{k+1}\big(1+\frac{1}{2\log^2(n/(k+1))}\big)
\le\frac{n}{k+1}\big(1+\frac{1}{k}\big)=\frac{n}{k}.
$$
So we need only to show that (2.7) and (2.8) hold, which will be done
in what follows.

First we prove that (2.7) is true. To do so, let
$$f(x)=x-\frac{3275e}{a}\log\frac{ax+b}{b}-3275(\frac{e}{b}+1).$$
Then $f'(x)=1-3275e/(ax+b)$. Further, $f'(x)>0$ if $ax+b>3275e$.
To prove (2.7), it is sufficient to show that $f(n)>0$.
Actually, let $\Delta :=\frac{e}{a}\log\frac{an+b}{b}
+\frac{e}{b}$. Since $\Delta >k$, we have
$f(n)-n=-3275(\Delta +1)<-3275(k+1)$. So $-n<-3275(k+1)$ and
(2.7) follows immediately. It remains to show that $f(n)>0$.
We divide the proof into the following three cases:

{\sc Case 1.} $a\le 18$, $b\le\frac{3275a(\sqrt{2}e+1)}{e^a-1}+1$
and $n>120000$. Clearly, $f(120000)>0$. So $f(n)>0$ as desired.

{\sc Case 2.} $a\le 18$, $b>\frac{3275a(\sqrt{2}e+1)}{e^a-1}+1$
and $n>\frac{b}{a}\big(e^{a(\sqrt{2b^2+1}-1)/b}-1\big)$.
Since $a\ge 1$ and $(\sqrt{2b^2+1}-1)/b\ge 1$ for all integers $b>1$, then
for any real number $x>\frac{b}{a}\big(e^{a(\sqrt{2b^2+1}-1)/b}-1\big)$, we have
$$ax+b>be^{a(\sqrt{2b^2+1}-1)/b}\ge\Big(\frac{3275a(\sqrt{2}e+1)}{e^a-1}+1\Big)e^a
>3275(\sqrt{2}e+1)\frac{ae^a}{e^a-1}>3275e,$$
which implies that $f'(x)>0$. Since $b\ge\frac{3275a(\sqrt{2}e+1)}{e^a-1}+1$
and $\frac{\sqrt{2b^2+1}-1}{b}<\sqrt{2}$, one deduces that
\begin{align*}
f(n)&>f\Big(\frac{b}{a}\big(e^{a(\sqrt{2b^2+1}-1)/b}-1\big)\Big)\\
&=\frac{b}{a}\big(e^{a(\sqrt{2b^2+1}-1)/b}-1\big)
-\frac{3275e(\sqrt{2b^2+1}-1)}{b}-3275(\frac{e}{b}+1)\\
&>3275(\sqrt{2}e+1)+\frac{e^a-1}{a}-3275\sqrt{2}e
-\frac{e(e^a-1)}{a(\sqrt{2}e+1)}-3275>0
\end{align*}
as required.

{\sc Case 3.} $a>18$ and $n>\frac{b}{a}\big(e^{a(\sqrt{2b^2+1}-1)/b}-1\big)$.
Since $b\ge1$ and $(\sqrt{2b^2+1}-1)/b\ge\sqrt{3}-1$ for all positive integers $b$,
it follows that for any real number $x>\frac{b}{a}\big(e^{a(\sqrt{2b^2+1}-1)/b}-1\big)$,
we have $ax+b>be^{a(\sqrt{2b^2+1}-1)/b}\ge e^{19(\sqrt{3}-1)}>3275e$ and so $f'(x)>0$.
Hence
\begin{align*}
f(n)>f\Big(\frac{b}{a}\big(e^{a(\sqrt{2b^2+1}-1)/b}-1\big)\Big)
>\frac{e^{19(\sqrt{3}-1)}-1}{19}-3275\sqrt{2}e-3275(e+1)>0.
\end{align*}
So (2.7) is proved.

Second, we show that (2.8) is true. Since
$k<\frac{e}{a}\log\frac{an+b}{b}+\frac{e}{b}<\frac{e}{a}(\log an+1)
+\frac{e}{b}$, so to show that (2.8) holds, we need only to show
$$2\big(\log n-\log(k+1)\big)^{2}>\frac{e}{a}\log an+\frac{e}{a}+\frac{e}{b}.\eqno(2.9)$$
To show that (2.9) is true, it is enough to prove that
$$
\log n-2\log(k+1)>\frac{e}{2a}+\frac{e}{2\log n}
\Big(\frac{\log a}{a}+\frac{1}{a}+\frac{1}{b}\Big),\eqno(2.10)
$$
which will be done in what follows. Let
$$g(x)=x-2\log\Big(\frac{ex}{a}+\frac{e\log a}{a}+\frac{e}{a}+\frac{e}{b}+1 \Big)-2.$$
Then $g'(x)=1-2/(x+\log a+a/b+a/e+1)$. Since $\frac{\log x}{x}\le\frac{1}{e}$ for any real
number $x\ge1$, we have $g(9)\ge9-2\log\big(\frac{9e}{a}+1+\frac{e}{a}+\frac{e}{b}+1\big)-2
\ge7-2\log(9e+e+e+2)>0$. One can easily check that $g'(x)>0$ if $x\ge 9$.
Therefore $g(x)>0$ if $x>9$. Under the assumption, one
can conclude that $n>e^9$, that is $\log n>9$.
It then follows that
$$\frac{e}{2a}+\frac{e}{2\log n}\Big(\frac{\log a}{a}+\frac1a+\frac{1}{b}\Big)<2 \eqno(2.11)$$
Deduced from (2.11) and $k<\frac{e}{a}(\log an+1)+\frac{e}{b}$, one has
\begin{align*}
\log n-2\log(k+1)-2>&\log n-2\log\Big(\frac{e\log an}{a}
+\frac{e}{a}+\frac{e}{b}+1\Big)-2 \\
=&g(\log n)>0,
\end{align*}
which means (2.10) is true. Thereby (2.8) is proved. This concludes that there is a
prime $p$ such that $\frac{n}{k+1}<p\le \frac{n}{k}.$

Finally, we show that for any prime $p$ with $p>\frac{n}{k+1}$,
one has $p>ak+2a+6$. To do so, we need only to show that
$\frac{n}{k+1}>ak+2a+6$. Let
$$
h(x)=x-\Big(\frac{e}{a}\log\frac{ax+b}{b}+\frac{e}{b}+1\Big)
\Big(e\log\frac{ax+b}{b}+\frac{ae}{b}+2a+6\Big).
$$
Then
$$h'(x)=1-\frac{2e^2}{ax+b}\log\frac{ax+b}{b}-\frac{2ae^2+3abe+6be}{b(ax+b)}.$$
Since $k<\frac{e}{a}\log\frac{an+b}{b}+\frac{e}{b}$, we have
$n-(k+1)(ak+2a+6)>h(n)$. So, to prove that $\frac{n}{k+1}>ak+2a+6$,
we only need to show that $h(n)>0$, which we will do in the following.

If $a\le 18$ and $b\le\frac{3275a(\sqrt{2}e+1)}{e^a-1}+1$,
then for any real number $x>120000$, one has
\begin{align*}
h'(x)&\ge 1-2e^2\frac{\log(ax+b)}{ax+b}-\frac{2e^2a+3ae+6e}{ax+b}\tag{2.12}\\
&> 1-2e^2\frac{\log120000}{120000}-e\frac{2ea+3a+6}{120000}\\
&>1-\frac{18\times 21}{120000}-\frac{3(9\times 18+6)}{120000}>0.
\end{align*}
This implies that $h(n)>h(120000)>0$ for any integer $n>120000$.

If $a\le 18$ and $b>\frac{3275a(\sqrt{2}e+1)}{e^a-1}+1$, since
$1\le (\sqrt{2b^2+1}-1)/b<\sqrt{2}$ for all $b\ge2$, it then follows that
for any real number $x>\frac{b}{a}\big(e^{a(\sqrt{2b^2+1}-1)/b}-1\big)$,
we have
\begin{align*}
h'(x)&> 1-\frac{2e^2}{b}\frac{\log\frac{ax+b}{b}}
{\frac{ax+b}{b}}-\frac{e(2ea+3a+6)}{ax+b}\\
&>1-\frac{2e^2}{b}\frac{a}{e^a}-\frac{e(2ea+3a+6)}{be^a}\\
&>1-\frac{4e^2+3e+\frac{6e}{a}}{\frac{e^a}{e^a-1}\big(3275(\sqrt{2}e+1)+\frac{e^a-1}{a}\big)}\\
&>1-\frac{4e^2+9e}{3275(\sqrt{2}e+1)}>0.
\end{align*}
Hence for any integer $n$ with $n>\frac{b}{a}\big(e^{a(\sqrt{2b^2+1}-1)/b}-1\big)$, one has
\begin{align*}
&h(n)>h\Big(\frac{b}{a}\big(e^{a(\sqrt{2b^2+1}-1)/b}-1\big)\Big) \\
&>3275(\sqrt2e+1)+(e^a-1)/a-\big(\sqrt{2}e+e+1\big)\big(\sqrt{2}ae+ae+2a+6\big)>0.
\end{align*}

If $a>18$, then by (2.12) and noting that $\frac{\sqrt{2b^2+1}-1}{b}\ge \sqrt{3}-1$
for any positive integer $b$, it follows that for any real number
$x>\frac{b}{a}\big(e^{a(\sqrt{2b^2+1}-1)/b}-1\big)$, one has
\begin{align*}
h'(x)&>1-2e^2\frac{(\sqrt{3}-1)a}{e^{(\sqrt{3}-1)a}}-\frac{e(2ea+3a+6)}{e^{(\sqrt{3}-1)a}}\\
&=1-\frac{a}{e^{(\sqrt{3}-1)a}}\big(2\sqrt{3}e^2+3e+\frac{6e}{a}\big)\\
&>1-\frac{19}{e^{19(\sqrt{3}-1)}}\big(2\sqrt{3}e^2+3e+\frac{6e}{19}\big)>0.
\end{align*}
It then follows from the hypothesis
$n>\frac{b}{a}\big(e^{a(\sqrt{2b^2+1}-1)/b}-1\big)$ that
\begin{align*}
h(n)&>h\Big(\frac{b}{a}\big(e^{a(\sqrt{2b^2+1}-1)/b}-1\big)\Big) \\
&>\big(e^{a(\sqrt{3}-1}-1\big)/a-\big(\sqrt{2}e+e+1\big)\big(\sqrt{2}ae+ae+2a+6\big)>0.
\end{align*}
By the above discussion, we can now conclude that $h(n)>0$. Hence one gets that
$p>\frac{n}{k+1}>ak+2a+6$ as desired.

This completes the proof of Lemma 2.4.
\end{proof}

Now we consider the $p$-adic valuation of $S_{a,b}(n,k)$ for
the prime $p$ appeared in Lemma 2.4.

\begin{lem}
Let $a,b,n$ and $k$ be positive integers such that $1\le k\le n$.
If there is a prime $p$ satisfying that $\frac{n}{k+1}<p\le \frac{n}{k}$
and $p>ak+2a+\frac{2b}{p}$, then one has $v_p(S_{a,b}(n,k))=-k$.
\end{lem}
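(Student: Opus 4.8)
The plan is to compute the $p$-adic valuation of each term of $S_{a,b}(n,k)$ and then invoke the ultrametric inequality. First I would record three consequences of the hypotheses. Since $p>ak+2a\ge 2a$, we have $0<a<p$, so $\gcd(a,p)=1$ and the congruence $ai+b\equiv 0\pmod p$ has a unique solution $i_0\in\{0,1,\dots,p-1\}$; the indices $i$ with $p\mid ai+b$ are exactly $i_0,i_0+p,i_0+2p,\dots$. Next, from $p>ak+2a+\frac{2b}{p}$ one gets $ak+a+\frac{b}{p}<p$, whence the largest term satisfies $a(n-1)+b<an+b<a(k+1)p+b<p^2$ (using $n<(k+1)p$); therefore every term $ai+b$ divisible by $p$ is divisible by $p$ exactly once, i.e. has valuation $1$. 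Finally, from $\frac{n}{k+1}<p\le\frac{n}{k}$ we have $kp\le n<(k+1)p$, and counting the indices $i_0+tp$ lying in $[0,n-1]$ (one bad index in every block of $p$ consecutive integers) shows that the number $m$ of bad terms satisfies $m\in\{k,k+1\}$. Writing the bad values as $(c+at)p$ for $t=0,\dots,m-1$, where $cp=ai_0+b$, the numbers $y_t:=c+at$ lie in $[1,p-1]$ and so are coprime to $p$.

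Then I would decompose $S_{a,b}(n,k)$ according to how many bad indices each $k$-subset contains. A $k$-subset using $j$ bad indices contributes valuation $-j$, so the minimal valuation is $-k$ (attained since $m\ge k$), coming precisely from the $k$-subsets contained in the bad set. Their total equals
$$
\frac{1}{p^k}\,e_k\Big(\tfrac{1}{y_0},\dots,\tfrac{1}{y_{m-1}}\Big)
=\frac{1}{p^k}\cdot\frac{e_{m-k}(y_0,\dots,y_{m-1})}{e_m(y_0,\dots,y_{m-1})},
$$
and since $p\nmid e_m(y)=\prod_t y_t$, its valuation is $-k+v_p\big(e_{m-k}(y)\big)$. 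Because every other $k$-subset contributes valuation $\ge-(k-1)>-k$, the ultrametric inequality reduces everything to proving $p\nmid e_{m-k}(y)$.

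If $m=k$ this is immediate, as $e_{m-k}(y)=e_0(y)=1$. The crux is the case $m=k+1$, where $e_{m-k}(y)=e_1(y)=\sum_{t=0}^{k}(c+at)=\frac{(k+1)(2c+ak)}{2}$. Since $p>ak+2a>k+1$ is an odd prime, $p\nmid k+1$ and $p\nmid 2$, so I must show $p\nmid(2c+ak)=y_0+y_k$. Because $1\le y_0<y_k\le p-1$, one checks that $y_0+y_k$ lies in an interval whose only multiple of $p$ is $p$ itself, so I need only rule out $y_0+y_k=p$, i.e. $2c+ak=p$. Here I would use that $m=k+1$ forces the largest bad index to satisfy $i_0+kp\le n-1$, whence $(c+ak)p\le a(n-1)+b\le a\big((k+1)p-2\big)+b$, giving $cp\le ap-2a+b$. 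Substituting $c=\frac{p-ak}{2}$ into this inequality yields $p\le ak+2a+\frac{2b}{p}-\frac{4a}{p}$, contradicting the hypothesis $p>ak+2a+\frac{2b}{p}$. Hence $p\nmid e_1(y)$, and in both cases $v_p(S_{a,b}(n,k))=-k$.

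The main obstacle is exactly this last step: merely bounding $m\in\{k,k+1\}$ is not enough, because $m=k+1$ genuinely occurs (for instance $a=1,b=6,p=7,k=3,n=27$), and one must show the $\binom{k+1}{k}$ leading terms do not conspire to cancel. The delicate point is that this non-cancellation is equivalent to $2c+ak\ne p$, and it is precisely the refined shape of the hypothesis—the summands $2a$ and $\frac{2b}{p}$—that, combined with the constraint $cp\le ap-2a+b$ coming from $m=k+1$, produces the needed contradiction.
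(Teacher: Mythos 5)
Your proof is correct and follows essentially the same route as the paper: isolate the $k$-subsets consisting entirely of $p$-divisible terms, reduce via the symmetric-function identity to showing $p\nmid\frac{(k+1)(2c+ak)}{2}$ in the case of $k+1$ bad terms, bound all remaining terms by valuation $\ge 1-k$, and conclude by the ultrametric inequality. The only divergence is in the crux step you highlight: the paper notes directly that $c=\frac{ai_0+b}{p}<a+\frac{b}{p}$, hence $2c+ak<ak+2a+\frac{2b}{p}<p$, so $p\nmid(2c+ak)$ is immediate; your longer detour ruling out $2c+ak=p$ via the constraint $i_0+kp\le n-1$ is valid but unnecessary.
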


\begin{proof}
Since $p>ak+2a+\frac{2b}{p}$, $a$ and $p$ are relatively prime.
Hence there is a unique integer $r\in\{0,1,2,...,p-1\}$ such that
$p|(ar+b)$. Let $a_0=\frac{ar+b}{p}.$ Then $a_0<a+\frac{b}{p}$.
Evidently, one can split the sum $S_{a,b}(n,k)$ into two parts:
$S_{a,b}(n,k)=S_1+S_2,$ where
$$
S_1=\sum_{\substack{0\leq i_1<\cdots<i_k\leq n-1 \\
p\mid(ai_j+b) \ \forall 1\le j\le k}}\prod_{j=1}^{k}\frac{1}{ai_j+b} \ {\rm and} \
S_2=\sum_{\substack{0\leq i_1<\cdots<i_k\leq n-1 \\
\exists j\ {\rm s.t.\ }p\nmid(ai_j+b)}}\prod_{j=1}^{k}\frac{1}{ai_j+b}.
$$

First we rewrite the sum $S_1$. Let $ai'+b$ be any term divided by $p$
in $\{ai+b\}_{i=0}^{n-1}$. Then $ai'+b\equiv 0\pmod p$ and $0\le i'\le n-1$. Since
$ar+b\equiv 0\pmod p$, it follows that $i'\equiv r\pmod p$, i.e. $i'=r+pi''$
with $0\le i''\le \big\lfloor\frac{n-r-1}{p}\big\rfloor$.
But $\frac{n}{k+1}<p\le\frac{n}{k}$ implies that
$$k-\frac{r+1}{p}\le \frac{n-r-1}{p}<k+1-\frac{r+1}{p}.$$
Then one deduces immediately that
$\big\lfloor\frac{n-r-1}{p}\big\rfloor=k+t$, where
\begin{align*}
t=\bigg\{\begin{array}{cl}
       -1, & {\rm if}\  \frac{n-1-r}{p}<k\\
       0, & {\rm otherwise}
     \end{array}
=\bigg\{\begin{array}{cl}
       -1, & {\rm if}\  p(ak+a_0)>a(n-1)+b\\
       0, & {\rm otherwise.}
     \end{array}
\end{align*}
Thus the set of all the terms divided by $p$ in $\{ai+b\}_{i=0}^{n-1}$
is given as follows:
$$\{b+ar, b+ar+ap,..., b+ar+ap(k+t)\}$$
$$
=\{pa_0, pa_0+pa,..., pa_0+pa(k+t)\}.$$
Therefore one can rewrite the sum $S_1$ as follows:
$$
S_1=\sum_{0\leq l_1<\cdots<l_k\leq k+t}\prod_{j=1}^{k}\frac{1}{p(al_j+a_0)}
=\sum_{0\leq i_1<\cdots<i_k\leq k+t}\frac{1}{p^k}\prod_{j=1}^{k}\frac{1}{ai_j+a_0}.
\eqno(2.13)
$$

Consequently, we calculate $v_p(S_1)$. Claim that $v_p(S_1)=-k$.
In fact, since $p>ak+2a+\frac{2b}{p}>ak+2a_0$, one has $v_p(ai+a_0)=0$
for all $0\le i\le k$. Hence if $t=-1$, then by (2.13)
\begin{align*}
v_p(S_1)=-k+v_p\bigg(\prod_{i=0}^{k-1}\frac{1}{ai+a_0}\bigg)=-k.
\end{align*}
If $t=0$, then (2.13) gives us that

\begin{align*}
v_p(S_1)=&v_p\bigg(\sum_{0\leq i_1<\cdots<i_k\leq k}\frac{1}{p^k}
\prod_{j=1}^{k}\frac{1}{a_0+ai_j}\bigg)\\
=&-k+v_p\bigg(\sum_{0\le i_1<\cdots<i_k\le k}
\prod_{j=1}^{k}\frac{1}{a_0+ai_j}\bigg)                       \\
=&-k+v_p\bigg(\frac{\sum_{i=0}^{k}(a_0+ai)}{\prod_{i=0}^{k}(a_0+ai)}\bigg).
\end{align*}
But the identity
$$\sum_{i=0}^{k}(a_0+ai)=\frac{(k+1)(ak+2a_0)}{2}$$
together with the assumption $p>ak+2a_0$ yields that
$$v_p\Big(\sum_{i=0}^{k}(a_0+ai)\Big)=v_p(k+1)+v_p(ak+2a_0)-v_p(2)=0.$$
This infers that
$$v_p\bigg(\frac{\sum_{i=0}^{k}(a_0+ai)}{\prod_{i=0}^{k}(a_0+ai)}\bigg)=0.$$
Therefore $v_p(S_1)=-k$ as claimed. The claim is proved.

Let's now consider $v_p(S_2)$. Since
$$p^2>p\big(ak+2a+\frac{2b}{p}\big)>
\frac{n}{k+1}\cdot a(k+2)+p\cdot\frac{2b}{p}>a(n-1)+b,$$
it follows that $v_p(ai+b)\le 1$ for all $0\le i\le n-1$. Then
\begin{align*}
v_p(S_2)&=v_p\Big(\sum_{\substack{0\leq i_1<\cdots<i_k\leq n-1 \\
\exists j\ {\rm s.t.\ }p\nmid(ai_j+b)}}\prod_{j=1}^{k}\frac{1}{ai_j+b}\Big)\\
&\ge\min_{\substack{0\leq i_1<\cdots<i_k\leq n-1 \\
\exists j\ {\rm s.t.\ }p\nmid(1+ai_j)}}v_p\bigg(\prod_{j=1}^{k}\frac{1}{1+ai_j}\bigg)
\ge 1-k. \tag{2.14}
\end{align*}

Finally, by the above claim and (2.14), we can derive immediately that
$$v_p(S_{a,b}(n,k))=v_p(S_1+S_2)=-k$$
as required. This ends the proof of Lemma 2.5.
\end{proof}

\section{\bf Proof of Theorem 1.1}

This section is devoted to the proof of Theorem 1.1.

\textit{Proof of Theorem 1.1}.
Clearly $S_{a,b}(n,k)=1$ if $b=n=k=1$ and $S_{a,b}(n,k)$ is not an integer
if $n=k=1$ and $b\ge 2$. By Lemma 2.1 we know that $S_{a,b}(n,k)$
is not an integer if $k=1$ and $n\ge 2$. So we let $2\le k\le n$
in what follows.

First let $a>18$ or $b>\frac{3275a(\sqrt{2}e+1)}{e^a-1}+1$. If either
$2\le n\le\frac{b}{a}e^{a(\sqrt{2b^2+1}-1)/b}-\frac{b}{a}$ or
$k\ge\frac{e}{a}\log\frac{an+b}{b}+\frac{e}{b}$, then by Lemma 2.2, one has that
$0<S_{a,b}(n,k)<1$, which implies that $S_{a,b}(n,k)$ is not an integer.
If $n>\frac{b}{a}e^{a(\sqrt{2b^2+1}-1)/b}-\frac{b}{a}$ and
$k<\frac{e}{a}\log\frac{an+b}{b}+\frac{e}{b}$, then by Lemma 2.4
there is a prime $p$ satisfying $\frac{n}{k+1}<p\le \frac{n}{k}$
and $p>ak+2a+6$. Hence $\frac{b}{p}<\frac{b(k+1)}{n}$.
But from $n>\frac{b}{a}e^{a(\sqrt{2b^2+1}-1)/b}-\frac{b}{a}$ and
$k<\frac{e}{a}\log\frac{an+b}{b}+\frac{e}{b}$ one derives that
$$\frac{b(k+1)}{n}<\frac{b(\frac{e}{a}\log\frac{an+b}{b}+\frac{e}{b}+1)}{n}
=\frac{e\log(1+\frac{an}{b})}{\frac{an}{b}}+\frac{b+e}{n}<3.$$
So $p>ak+2a+6>ak+2a+2b/p$. It then follows from Lemma 2.5
that $v_p\big(S_{a,b}(n,k)\big)=-k<0$.
Thus $S_{a,b}(n,k)$ is not an integer if
$n>\frac{b}{a}e^{a(\sqrt{2b^2+1}-1)/b}-\frac{b}{a}$ and
$k<\frac{e}{a}\log\frac{an+b}{b}+\frac{e}{b}$. This concludes
that $S_{a,b}(n,k)$ is not an integer if $a>18$ or
$b>\frac{3275a(\sqrt{2}e+1)}{e^a-1}+1$.

Consequently, let $a\le 18, b\le\frac{3275a(\sqrt{2}e+1)}{e^a-1}+1$ and $n>120000$.
If $k\ge\frac{e}{a}\log\frac{an+b}{b}+\frac{e}{b}$, then by Lemma 2.2,
one has $0<S_{a,b}(n,k)<1$. If $k<\frac{e}{a}\log\frac{an+b}{b}+\frac{e}{b}$,
then by Lemma 2.4 there is a prime $p$ satisfying $\frac{n}{k+1}<p\le \frac{n}{k}$
and $p>ak+2a+6$. Hence $\frac{b}{p}<\frac{b(k+1)}{n}<3$, which gives that
$p>ak+2a+6>ak+2a+2b/p$. Then by Lemma 2.5 we obtain that
$v_p\big(S_{a,b}(n,k)\big)=-k<0$. So $S_{a,b}(n,k)$ is not an integer
if $a\le 18, b\le\frac{3275a(\sqrt{2}e+1)}{e^a-1}+1$ and $n>120000$.

By Lemma 2.2, it remains to prove that $S_{a,b}(n,k)$
is not an integer if $a\le 18$, $b\le\frac{3275a(\sqrt{2}e+1)}{e^a-1}+1$,
$2\le k<\frac{e}{a}\log\frac{an+b}{b}+\frac{e}{b}$
and $\frac{b}{a}\big(e^{a(\sqrt{2b^2+1}-1)/b}-1\big)<n\le 120000$. This
will be done in what follows.

Before doing so, we need to develop an analysis about prime distribution in the intervals
$(\frac{n}{k+1}, \frac{n}{k}]$ where $2\le k<e\log120001+e<35$ and $n\le 120000$.
Let $p_i$ denote the $i$-th prime. For $2\le k\le 34$, define $i_k$ to be the integer
satisfying that $kp_{i_k}\ge(k+1)p_{i_k-1}$ and $kp_{i+1}<(k+1)p_i$ for all integers $i$
with $i_k\le i\le 11301$, where $p_{11301}=119993$, the biggest prime less than 120000.
We list all the values of $i_k$ and $p_{i_k}$ in the following Table 1.
Evidently, Table 1 gives us the observation that $(k+1)/p_{i_k}<1/2$ for $2\le k\le 34$.
\begin{table}[h]\caption{}\begin{tabular}{|c|c|c|c|c|c|c|c|c|c|c|c|}
  \hline
       $k$  & 2  & 3  & 4  & 5  & 6  & 7  & 8  & 9  & 10 & 11 & 12      \\\hline
     $i_k$  & 5  & 5  & 10 & 10 & 12 & 12 & 16 & 31 & 31 & 31 & 31      \\\hline
  $p_{i_k}$ & 11 & 11 & 29 & 29 & 37 & 37 & 53 & 127& 127& 127& 127     \\\hline
       $k$  & 13 & 14 & 15 & 16 & 17 & 18 & 19 & 20 & 21 & 22 & 23      \\\hline
     $i_k$  & 31 & 35 & 35 & 35 & 47 & 48 & 48 & 48 & 63 & 63 & 67      \\\hline
  $p_{i_k}$ & 127& 149& 149& 149& 211& 223& 223& 223& 307& 307& 331     \\\hline
       $k$  & 24 & 25 & 26 & 27 & 28 & 29 & 30 & 31 & 32 & 33 & 34      \\\hline
     $i_k$  & 67 & 67 & 67 & 67 & 67 & 67 & 100& 100& 100& 100& 100     \\\hline
  $p_{i_k}$ & 331& 331& 331& 331& 331& 331& 541& 541& 541& 541& 541     \\\hline
\end{tabular}\end{table}
We claim that for any integer $k$ with $2\le k\le 34$, if $kp_{i_k}\le n\le 120000$,
then there is always a prime $p$ such that $p\ge p_{i_k}$ and $\frac{n}{k+1}<p\le\frac{n}{k}$.
Actually, if $p_{i_k}>\frac{n}{k+1}$, then we have done. If $p_{i_k}\le \frac{n}{k+1}$,
then the fact $\frac{n}{k+1}<120000$ tells us that
there is an index $i$ with $i_k\le i<11301$ such that $p_i\le\frac{n}{k+1}<p_{i+1}$.
Since $kp_{i+1}<(k+1)p_i\le n$, we have $p_{i+1}<\frac{n}{k}$. So letting
$p:=p_{i+1}$ givers us the desired result $\frac{n}{k+1}<p<\frac{n}{k}$ and the claim is proved.

Let's continue the proof of Theorem 1.1. Let $13\le a\le 18$. Then one can easily check
that $b\le\frac{3275a(\sqrt2e+1)}{e^a-1}+1<2$. So $b=1$. Furthermore we have
$k<\frac{e}{a}\log\frac{an+b}{b}+\frac{e}{b}\le\frac{e}{a}\log 120000a+e<6$.
It then follows that
$n>\frac{b}{a}e^{a(\sqrt{2b^2+1}-1)/b}-\frac{b}{a}=\frac{e^{(\sqrt3-1)a}-1}{a}>kp_{i_k}$.
So the above claim infers that there is a prime $p$ such that
$\frac{n}{k+1}<p\le\frac{n}{k}$. Besides, since $k<6$, one has
$p>n/(k+1)\ge\frac{e^{(\sqrt3-1)a}-1}{6a}>7a+2>ak+2a+2b/p$ for all integers $a$
with $13\le a\le 18$. Then applying Lemma 2.5 yields $v_p\big(S_{a,b}(n,k)\big)=-k<0$.
Hence $S_{a,b}(n,k)$ is not an integer in this case.

Let $2\le a\le 12$ and $b\le\min\{27,\frac{3275a(\sqrt2e+1)}{e^a-1}+1\}$.
Then $k<\frac{e}{a}\log\frac{an+b}{b}+\frac{e}{b}\le \frac{e}{a}\log(120000a+1)+e
\le\frac{e}{2}\log 240001+e<20$. Define $k_a:=\big\lfloor\frac{e}{a}\log(120000a+1)+e\big\rfloor$
and
$n_a:=\max\big\{k_ap_{i_{k_a}}, a(k_a+1)(k_a+2)
+\big\lceil\frac{2b(k_a+1)}{p_{i_{k_a}}}\big\rceil\big\}.$
Then the value of $n_a$ for $2\le a\le 12$ can be listed as follows:
\begin{center}\begin{tabular}{|c|c|c|c|c|c|c|c|c|c|c|c|c|}
  \hline
   $a$  &  2    & 3    & 4    & 5    & 6   & 7   & 8   & 9   & 10  & 11  & 12   \\\hline
  $n_a$ &  4437 & 2086 & 1397 & 1143 & 550 & 640 & 588 & 515 & 571 & 627 & 516  \\
  \hline
\end{tabular}\end{center}
Now fix an integer $a$ with $2\le a\le 12$. If $n_a\le n\le 120000$ and $2\le k\le k_a$,
then $n\ge k_ap_{i_{k_a}}\ge kp_{i_k}$. It follows that $p_{i_{k_a}}\le\frac{n}{k_a}\le\frac{n}{k}$.
If $p_{i_{k_a}}\le\frac{n}{k+1}$, then by the above claim we know that there is a prime $p'$
satisfying that $\frac{n}{k+1}<p'\le\frac{n}{k}$. Clearly, $p'>p_{i_{k_a}}$.
If $p_{i_{k_a}}>\frac{n}{k+1}$, then $\frac{n}{k+1}<p_{i_{k_a}}\le \frac{n}{k}$.
This concludes that we can always choose a prime $p\ge p_{i_{k_a}}$ such that
$\frac{n}{k+1}<p\le \frac{n}{k}$. For such prime $p$, we have
$p>\frac{n}{k+1}\ge\frac{n_a}{k_a+1}\ge ak+2a+2b/p$.
Hence by Lemma 2.5, $S_{a,b}(n,k)$ is not an integer. If $n\le n_a-1$ and $k\le k_a$,
then by direct computations using Maple 12 (see Program 1 in Appendix) and the following
recursive formulas:
$$S_{a,b}(1,1)=\frac{1}{b},\ S_{a,b}(n,1)=S_{a,b}(n-1,1)+\frac{1}{b+(n-1)a}, \eqno (3.1)$$
$$S_{a,b}(n,k)=S_{a,b}(n-1,k)+\frac{1}{b+(n-1)a}S_{a,b}(n-1,k-1) \ \text{for} \ 2\le k\le n-1 \eqno (3.2)$$
and
$$S_{a,b}(n, n)=\frac{1}{b+(n-1)a}S_{a, b}(n-1, n-1), \eqno (3.3)$$
we can check
that $S_{a,b}(n,k)$ is not an integer in this case.

Since $\frac{3275a(\sqrt2e+1)}{e^a-1}+1<28$ if $9\le a\le 12$, the above
proof implies that $S_{a,b}(n,k)$ is not an integer if $9\le a\le 12$.
Hence to complete the proof for the case $a\ge 2$, one may
let $2\le a\le 8$ and $28\le b\le\frac{3275a(\sqrt2e+1)}{e^a-1}+1$.
Then $k<\frac{e}{a}\log\frac{an+b}{b}+\frac{e}{b}\le\frac{e}{a}
\log(\frac{120000a}{28}+1)+\frac{e}{28}<13$. Since
$k<\frac{e}{a}\log\frac{an+b}{b}+\frac{e}{b}$, we have
$n>\frac{b}{a}\big(e^{a(k/e-1/b)}-1\big)$. We can check that
$\frac{b}{a}\big(e^{a(k/e-1/b)}-1\big)\ge\frac{28}{2}\big(e^{2(k/e-1/28)}-1\big)
>kp_{i_k}$ for each $k$ with $2\le k\le 12$. Hence by the above claim we know that
there is a prime $p\ge p_{i_k}$ satisfying $\frac{n}{k+1}<p\le\frac{n}{k}$.
Since $p\ge p_{i_k}$, by the above observation, one has $(k+1)/p\le (k+1)/p_{i_k}<1/2$.
It then follows that
\begin{align*}
n-(k+1)&\Big(ak+2a+\frac{2b}{p}\Big)
>b\Big(\frac{e^{a(k/e-1/b)}}{a}-\frac{1}{a}-1\Big)-a(k+1)(k+2) \\
&>28\Big(\frac{e^{a(k/e-1/28)}}{2}-\frac{1}{a}-1\Big)-a(k+1)(k+2). \tag{3.4}
\end{align*}
We can easily check that the right-hand side of (3.4) is positive if $2\le a\le 8$ and
$2\le k\le 12$. Thus $p>\frac{n}{k+1}>ak+2a+2b/p$. Therefore by Lemma 2.5,
$S_{a.b}(n,k)$ is not an integer if $2\le a\le 8$ and
$28\le b\le\frac{3275a(\sqrt2e+1)}{e^a-1}+1$. This concludes that
$S_{a,b}(n,k)$ is not an integer if $a\ge 2$.
To finish the proof of Theorem 1.1, one needs only to handle
the remaining case $a=1$. In the following we let $a=1$.

Let $b\ge 45$. Then $k<e\log\frac{n+b}{b}+\frac{e}{b}\le
e\log\frac{120045}{45}+\frac{e}{45}<25$ and $n>b(e^{k/e-1/b}-1)$.
So for any integer $k$ with $2\le k\le 24$, we have
$n>b\big(e^{k/e-1/b}-1\big)\ge45\big(e^{k/e-1/45}-1\big)>kp_{i_k}$ and
$n-(k+1)(ak+2a+2b/p_{i_k})>b\big(e^{k/e-1/b}-1-2(k+1)/p_{i_k}\big)-(k+1)(k+2)>0$.
Hence by the above claim, there is a prime $p\ge p_{i_k}$ with $\frac{n}{k+1}<p\le\frac{n}{k}$.
For such a prime $p$, one has $p>\frac{n}{k+1}>ak+2a+2b/p_{i_k}>ak+2a+2b/p$,
which implies that $S_{a,b}(n,k)$ is not an integer by Lemma 2.5.

Let $b\le 44$ and $k\ge 24$. If $b=1$, then by \cite{[CT]}, $S_{1,1}(n,k)$ is not an
integer. If $2\le b\le44$, then $k<e\log\frac{n+b}{b}+\frac{e}{b}\le\log60001+e/2<32$
and $n>b(e^{k/e-1/b}-1)$. It is easy to check for any $k$ with $24\le k\le 31$ that
$b(e^{k/e-1/b}-1)>2(e^{k/e-1/2}-1)>kp_{i_k}$ and
$b(e^{k/e-1/b}-1)-(k+2)(k+1)-2b(k+1)/{p_{i_k}}>2(e^{k/e-1/b}-1-2(k+1)/p_{i_k})-(k+2)(k+1)>0$.
So by the above claim there is a prime $p$ such that
$\frac{n}{k}\ge p>\frac{n}{k+1}>\frac{b(e^{k/e-1/b}-1)}{k+1}>k+2+2b/p$.
Hence by Lemma 2.5, $S_{a,b}(n,k)$ is not an integer in this case.

Let $b\le 44$ and $2\le k\le 23$. If $n\ge 7613$, i.e., $n\ge 7613=23p_{i_{23}}$,
then $n\ge kp_{i_k}$ for $2\le k\le 23$. It follows from the above claim
that there is a prime $p$ satisfying $\frac{n}{k+1}<p\le \frac{n}{k}$. Further,
one has $n\ge7613>(k+1)(k+2+b)$ for any integer $k$ with $2\le k\le 23$, and so
$p>\frac{n}{k+1}>k+2+b\ge k+2+2b/p$. Therefore one yields from Lemma 2.5 that
$S_{a,b}(n,k)$ is not an integer. If $n\le 7612$, then using Maple 12
(see Program 2 in Appendix) and the recursive formulas (3.1) to (3.3),
one can check that $S_{a,b}(n,k)$ is not an integer except that $b=1, n=3$
and $k=2$, in which case $S_{a,b}(n,k)=1$.

This completes the proof of Theorem 1.1.

\section*{Appendix}

\noindent{\bf Program 1.}
\begin{verbatim}
IntTest1:=proc(a) local i,j,C,b,k,n,S;
C:=min(27, floor(3275*a*(1.4143*2.7183+1)/(2.7182^a-1)+1));
k:=floor(2.7183*log[2.7182](120000*a+1)/a+2.7183); S:=vector(k,0);
n:=[0,4437,2086,1397,1143,550,640,588,515,571,627,516];
for b from 1 to C do S:=vector(k,0);
for i from k to n[a] do S[1]:=S[1]+1/(a*i-a*k+b);
for j from 2 to k do S[j]:= S[j]+S[j-1]/(a*i-a*k+a*j-a+b);
if type(S[j],integer) then print(i-k+j, j*IsInt) end if;
end do; end do; end do; end proc
for a from 2 to 12 do IntTest1(a) end do;
\end{verbatim}

\noindent{\bf Program 2.}
\begin{verbatim}
IntTest2:=proc(b)
local i,j,S; S:=vector(23,0);
for i from 23 to 7612 do S[1]:=S[1]+1/(i-23+b);
for j from 2 to 23 do S[j]:= S[j]+S[j-1]/(i+j-24+b);
if type(S[j],integer) then print(i-23+j, j*IsInt) end if;
end do; end do; end proc
for b from 1 to 44 do IntTest2(b) end do
\end{verbatim}

\end{document}